\def\@cite#1#2{[{{\bfseries #1}\if@tempswa , #2\fi}]}
\renewcommand{\section}{%
\@startsection{section}{1}{\z@}
{0.5truecm plus -1ex minus -.2ex}%
{1.0ex plus .2ex}{\bfseries\large}}
\def\@seccntformat#1{\csname the#1\endcsname.\ }
\numberwithin{equation}{section} 
\theoremstyle{theorem}
\newtheorem{thm}{Theorem}[section]
\newtheorem{lem}[thm]{Lemma}
\theoremstyle{definition}
\newtheorem{remark}{Remark}[section]
\newtheorem*{prth1.1}{Proof of Theorem 1.1}
\newtheorem*{prth1.1c}{Proof of Theorem 1.1 (continued)}
\newtheorem*{prcor1.2}{Proof of Corollary 1.2}
\newtheorem*{prth1.3}{Proof of Theorem 1.3}
\newcommand{\ep}{\varepsilon}
\newcommand{\pa}{\partial}
\newcommand{\Rn}{\mathbb{R}^n}
\newcommand{\R}{\mathbb{R}}
\newcommand{\cl}[1]{{\overline#1}}
\newcommand{\Tmax}{T_{\rm max}}
\begin{document}
\footnote[0]{
    2010{\it Mathematics Subject Classification}\/. 
    Primary: 35A01; Secondary: 35Q92, 92C17.
    }
\footnote[0]{
    {\it Key words and phrases}\/:
    chemotaxis; attraction-repulsion; global existence; boundedness.
    }
\begin{center} 
    \Large{{\bf 
    Global existence and boundedness 
    in a fully parabolic 
    attraction-repulsion chemotaxis system
    with signal-dependent sensitivities
    without logistic source
    }}%
\end{center}
\vspace{5pt}
\begin{center}
    Yutaro Chiyo\\
    \vspace{2mm}
    Department of Mathematics, 
    Tokyo University of Science\\
    1-3, Kagurazaka, Shinjuku-ku, 
    Tokyo 162-8601, Japan\\
    \vspace{6mm}

    Masaaki Mizukami\footnote{Corresponding author.}\\
    \vspace{2mm}
    Department of Mathematics, Faculty of Education, 
    Kyoto University of Education \\
    1, Fujinomori, Fukakusa, Fushimi-ku, Kyoto 
    612-8522, Japan\\
    \vspace{6mm}

    Tomomi Yokota%
      \footnote{Partially supported by Grant-in-Aid for
      Scientific Research (C), No.\,21K03278.}
    \footnote[0]{
    E-mail: 
    {\tt ycnewssz@gmail.com}, 
    {\tt masaaki.mizukami.math@gmail.com}, 
    {\tt yokota@rs.tus.ac.jp}
    }\\
    \vspace{2mm}
    Department of Mathematics, 
    Tokyo University of Science\\
    1-3, Kagurazaka, Shinjuku-ku, 
    Tokyo 162-8601, Japan\\
    \vspace{2pt}
\end{center}
\begin{center}    
    \small \today
\end{center}

\vspace{2pt}
\newenvironment{summary}
{\vspace{.5\baselineskip}\begin{list}{}{%
     \setlength{\baselineskip}{0.85\baselineskip}
     \setlength{\topsep}{0pt}
     \setlength{\leftmargin}{12mm}
     \setlength{\rightmargin}{12mm}
     \setlength{\listparindent}{0mm}
     \setlength{\itemindent}{\listparindent}
     \setlength{\parsep}{0pt}
     \item\relax}}{\end{list}\vspace{.5\baselineskip}}
\begin{summary}
{\footnotesize {\bf Abstract.}
    This paper deals with the fully parabolic attraction-repulsion 
    chemotaxis system with signal-dependent sensitivities,
%
    \begin{align*}
        \begin{cases}
        u_t=\Delta u-\nabla \cdot (u\chi(v)\nabla v)
                         +\nabla \cdot (u\xi(w)\nabla w),
        &x \in \Omega,\ t>0,
    \\[1.05mm]
        v_t=\Delta v-v+u,
        &x \in \Omega,\ t>0,
    \\[1.05mm]
        w_t=\Delta w-w+u,
        &x \in \Omega,\ t>0
        \end{cases}
    \end{align*}
    under homogeneous Neumann boundary conditions and initial conditions, 
    where $\Omega \subset \mathbb{R}^n$ $(n \ge 2)$ is a bounded domain with
    smooth boundary, 
    $\chi, \xi$ are functions satisfying some conditions. 
    Global existence and boundedness of classical solutions 
    to the system with logistic source 
    have already been obtained 
    by taking advantage of the effect of logistic dampening 
    (J. Math.\ Anal.\ Appl.; 2020;489;124153). 
    This paper establishes existence of global bounded classical solutions 
    despite the loss of logistic dampening.
}
\end{summary}
\vspace{10pt}

\newpage

\section{Introduction} \label{Sec1}

Recently, in~\cite{CMY-2020}, we studied the fully parabolic 
attraction-repulsion chemotaxis system 
with signal-dependent sensitivities and logistic source,
%
    \begin{align} \label{ARC2}
        \begin{cases}
        u_t=\Delta u-\nabla \cdot (u\chi(v)\nabla v)
                         +\nabla \cdot (u\xi(w)\nabla w)
                         +\mu u(1-u),
    \\[1.05mm]
        v_t=\Delta v-v+u,
    \\[1.05mm]
        w_t=\Delta w-w+u,
        \end{cases}
    \end{align}
where $\chi, \xi$ are some decreasing functions and $\mu>0$. 
In the literature we obtained global existence and boundedness 
in \eqref{ARC2} by taking advantage of the effect of the logistic term. 
In light of this result, 
the following question is raised:
\begin{center}
{\it Does boundedness of  solutions still hold 
without logistic term?}
\end{center}

In order to provide an answer to the above question, 
this paper focuses on 
the fully parabolic attraction-repulsion chemotaxis system with 
signal-dependent sensitivities, 
%
    \begin{align} \label{ARC}
        \begin{cases}
        u_t=\Delta u-\nabla \cdot (u\chi(v)\nabla v)
                         +\nabla \cdot (u\xi(w)\nabla w),
        &x \in \Omega,\ t>0,
    \\[1.05mm]
        v_t=\Delta v-v+u,
        &x \in \Omega,\ t>0,
    \\[1.05mm]
        w_t=\Delta w-w+u,
        &x \in \Omega,\ t>0,
    \\[1.8mm]
        \nabla u \cdot \nu=\nabla v \cdot \nu=\nabla w \cdot \nu=0, 
        &x \in \pa \Omega,\ t>0,
    \\[1.05mm]
        u(x, 0)=u_0(x),\ v(x, 0)=v_0(x),\ w(x, 0)=w_0(x),
        &x \in \Omega,
        \end{cases}
    \end{align}
where $\Omega \subset \Rn$ $(n \ge 2)$ is a bounded domain 
with smooth boundary $\pa \Omega$; 
$\nu$ is the outward normal vector to $\pa \Omega$; 
$\chi, \xi$ are positive known functions; 
$u, v, w$ are unknown functions.  
The initial data $u_0, v_0, w_0$ are supposed to be nonnegative 
functions satisfying
%
    \begin{align} \label{KF}
            &u_0 \in C^0(\cl{\Omega})
    \quad
            {\rm and}
    \quad 
            u_0 \neq 0,
\\[1.05mm] \label{KF2}
            &v_0, w_0 \in W^{1,\infty}(\Omega).
    \end{align}
\vspace{-4mm}

We now explain the background of 
\eqref{ARC}. 
Chemotaxis is a property of cells to move in response to 
the concentration gradient of a chemical substance produced by the cells. 
The origin of the problem of describing such biological phenomena is 
the chemotaxis model proposed by Keller--Segel~\cite{KS-1970}.
The above systems \eqref{ARC2} and \eqref{ARC} describe a process by which cells move 
in response to a chemoattractant and a chemorepellent produced 
by the cells themselves. 
There are a lot of studies for such attraction-repulsion chemotaxis systems, and 
we introduce some of them, by reducing 
parameters to $1$, as follows.
\vspace{2mm}

As to the system \eqref{ARC2} with constant sensitivities (i.e., 
$\chi(v) \equiv \chi$, $\xi(w) \equiv \xi$, where $\chi, \xi>0$ are constants), 
global existence and boundedness were obtained in~\cite{J-2015, JL-2015, JW-2015, JW-2020}. 
More precisely, Jin--Wang~\cite{JW-2015} investigated 
the one-dimensional case. 
Also, when $\chi=\xi$, Jin--Liu~\cite{JL-2015} studied 
the two- and three-dimensional cases. 
Recently, Jin--Wang~\cite{JW-2020} derived global existence and 
boundedness, and stabilization under the condition 
$\frac{\xi}{\chi} \ge C$ with some $C>0$ in the two-dimensional setting. 
In this way, boundedness is well established in the system \eqref{ARC2} 
with logistic term.

On the other hand, the system \eqref{ARC} with $\chi(v) \equiv \chi$, $\xi(w) \equiv \xi$ (positive constants) 
has been studied in \cite{L-pre, LT-2015}. 
In the two-dimensional setting, Liu--Tao~\cite{LT-2015} establishd 
global existence and boundedness 
under the condition $\chi<\xi$. 
In contrast, Lankeit~\cite{L-pre} showed finite-time blow-up 
in the three-dimensional radial setting
when $\chi>\xi$. 
Moreover, some related works which deal with
the parabolic--elliptic--elliptic version of \eqref{ARC2} 
can be found in~\cite{SS-2019, TW-2013, V-2019, YGZ-2017}. 
Specifically, Tao--Wang~\cite{TW-2013} derived 
global existence and boundedness under the condition $\chi<\xi$ in two or more space 
dimensions, 
while finite-time blow-up was proved in the two-dimensional setting 
when $\chi>\xi$ and the initial data satisfy 
the conditions that $\int_\Omega u_0>\frac{8\pi}{\chi-\xi}$ and that 
$\int_\Omega u(x)|x-x_0|^2\,dx$ ($x_0 \in \Omega$) is sufficiently small. 
Salako--Shen~\cite{SS-2019} obtained 
global existence and boundedness when $\chi=0$ (or $\mu>\chi-\xi+M$ with some $M>0$ in \eqref{ARC2}). 
Whereas in the two-dimensional setting, 
finite-time blow-up was shown by Yu--Guo--Zheng~\cite{YGZ-2017} 
and lower bound of blow-up time was given by Viglialoro~\cite{V-2019} 
under the condition $\chi>\xi$.
Thus we can see that if there is no logistic term, 
boundedness breaks down in some cases.

\vspace{2mm}

As mentioned above, the logistic term seems helpful to derive  
boundedness in \eqref{ARC2}, 
whereas it is not clear whether boundedness in \eqref{ARC} without logistic term holds or not. 
The purpose of this paper is to establish 
a result on boundedness in  \eqref{ARC} with extra information about the outcome by 
our previous work.
\vspace{2mm}

We now introduce conditions for the functions $\chi, \xi$ 
in order to state the main theorem. 
We assume throughout this paper that 
$\chi, \xi$ satisfy the following conditions:
%
    \begin{align}
            \label{chiclass}
            &\chi \in C^{1+\theta_1}([\eta_1,\infty)) 
                          \cap L^1(\eta_1,\infty)\ 
            (0<\exists\, \theta_1<1),
    \quad 
            \chi>0,  
    \\[3.05mm]
            \label{xiclass}
            &\xi \in C^{1+\theta_2}([\eta_2,\infty)) 
                        \cap L^1(\eta_2,\infty)\ 
            (0<\exists\, \theta_2<1),
    \quad 
            \xi>0,
    \\[3.05mm]
            \label{chibound}
            &\exists\, \chi_0>0;
    \quad 
            s\chi(s) \le \chi_0
    \quad 
            {\rm for\ all}\ s \ge \eta_1,          
    \\[3.05mm]
            \label{xibound}
            &\exists\, \xi_0>0;
    \quad 
            s\xi(s) \le \xi_0
    \quad 
            {\rm for\ all}\ s \ge \eta_2,
    \\[3.05mm]
            \label{chiineq}
            &\exists\, \alpha>0;
    \quad 
            \chi'(s)+\alpha|\chi(s)|^2 \le 0
    \quad 
            {\rm for\ all}\ s \ge \eta_1,
    \\[3.05mm]
            \label{xiineq}
            &\exists\, \beta>0;
    \quad 
            \xi'(s)+\beta|\xi(s)|^2 \le 0
    \quad 
            {\rm for\ all}\ s \ge \eta_2,
    \end{align}
where $\eta_1, \eta_2\ge0$ 
are constants which will be fixed 
in Lemma~\ref{LSE}; 
note that if $v_0, w_0>0$, 
then we can take $\eta_1, \eta_2>0$ (see also \eqref{eta} 
with $z_0>0$). 
Moreover, we suppose that $\alpha, \beta$ appearing 
in the conditions \eqref{chiineq}, \eqref{xiineq} satisfy
%
    \begin{align} \label{condiab}
          \alpha>\frac{\frac{n}{2}(2\delta+1)\big((n-1)\delta+n\big)+\sqrt{D}}
                     {2\delta(\beta-n)-\delta^2-\frac{n}{2}},
    \quad 
          \beta>n+\sqrt{\frac{n}{2}}
    \end{align}
 for some $\delta \in J:= \left(\beta-n-\sqrt{(\beta-n)^2-\frac{n}{2}},\, 
 \beta-n+\sqrt{(\beta-n)^2-\frac{n}{2}}\right)$, where
%
    \begin{align*}
         D
    &:= \frac{n\delta}{2}\big(2\beta+(2n-1)\delta\big)  
         \Big[
         2\delta(\beta-n)+\frac{n}{2}\Big(2n(\delta+1)^2-(2\delta+1)^2\Big)
         \Big].
    \end{align*}

\vspace{6mm}
\noindent Then the main result reads as follows.

%
\begin{thm} \label{mainthm}
 Let\/ $\Omega \subset \Rn$ $(n \ge 2)$ be a bounded domain 
 with smooth boundary. 
 Assume that $(u_0, v_0, w_0)$ satisfy \eqref{KF}, \eqref{KF2}. 
 Suppose that $\chi, \xi$ fulfill \eqref{chiclass}--\/\eqref{xiineq} 
 with $\alpha, \beta$ which satisfy \eqref{condiab} 
 for some $\delta \in J$. 
 Then there exists a unique triplet $(u, v, w)$ of nonnegative functions
%
    \begin{align*}
        &u \in C^0(\cl{\Omega} \times [0,\infty)) 
                 \cap C^{2,1}(\cl{\Omega} \times (0,\infty)),
    \\
        &v, w \in C^0(\cl{\Omega} \times [0,\infty)) 
                 \cap C^{2,1}(\cl{\Omega} \times (0,\infty)) 
                 \cap L^\infty(0,\infty; W^{1,\infty}(\Omega)),
    \end{align*}
 which solves \eqref{ARC} in the classical sense. 
 Moreover, the solution $(u, v, w)$ is bounded uniformly-in-time\/{\rm :}
%
    \begin{align*}
           \|u(\cdot, t)\|_{L^\infty(\Omega)}
         +\|v(\cdot, t)\|_{W^{1,\infty}(\Omega)}
         +\|w(\cdot, t)\|_{W^{1,\infty}(\Omega)} 
    \le C
    \end{align*}
 for all $t>0$ with some $C>0$.
\end{thm}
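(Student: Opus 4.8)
The plan is to reduce the theorem to a single uniform-in-time $L^p$ bound for $u$ and then bootstrap. First I invoke Lemma~\ref{LSE} to obtain a unique nonnegative classical solution $(u,v,w)$ on a maximal interval $[0,\Tmax)$, the pointwise lower bounds $v\ge\eta_1$ and $w\ge\eta_2$ that make $\chi(v),\xi(w)$ meaningful under \eqref{chiclass},\eqref{xiclass}, and the extensibility criterion that $\Tmax<\infty$ implies $\|u(\cdot,t)\|_{L^\infty(\Omega)}\to\infty$ as $t\uparrow\Tmax$. Integrating the first equation gives the mass identity $\int_\Omega u(\cdot,t)=\int_\Omega u_0$, so $u$ is bounded in $L^1(\Omega)$ and, by parabolic smoothing in the second and third equations, $v,w$ enjoy corresponding baseline bounds. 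It then suffices to bound $\|u(\cdot,t)\|_{L^p(\Omega)}$ uniformly in $t$ for some sufficiently large $p$.

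For this I test the first equation with $u^{p-1}$ to get $\frac1p\frac{d}{dt}\int_\Omega u^p+(p-1)\int_\Omega u^{p-2}|\nabla u|^2$ on the left and the cross terms $(p-1)\int_\Omega u^{p-1}\chi(v)\nabla u\cdot\nabla v-(p-1)\int_\Omega u^{p-1}\xi(w)\nabla u\cdot\nabla w$ on the right. Transferring the gradient onto $\chi(v)\nabla v$ and $\xi(w)\nabla w$ and inserting $\Delta v=v_t+v-u$, $\Delta w=w_t+w-u$, the repulsion furnishes the dissipative term $-\tfrac{p-1}{p}\int_\Omega u^p\bigl(-\xi'(w)\bigr)|\nabla w|^2\le-\tfrac{p-1}{p}\beta\int_\Omega u^p\xi(w)^2|\nabla w|^2$ through \eqref{xiineq}, the favorable sink $-\tfrac{p-1}{p}\int_\Omega u^{p+1}\xi(w)$, and lower-order terms tamed by \eqref{xibound}. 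The attraction is the dangerous contribution: it yields the manifestly positive term $\tfrac{p-1}{p}\int_\Omega u^p\bigl(-\chi'(v)\bigr)|\nabla v|^2\ge\tfrac{p-1}{p}\alpha\int_\Omega u^p\chi(v)^2|\nabla v|^2$ (by \eqref{chiineq}) together with the super-linear production $\tfrac{p-1}{p}\int_\Omega u^{p+1}\chi(v)$, the latter only partially offset by the repulsive sink since $\chi(v)$ and $\xi(w)$ are governed by the different signals.

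To control the positive gradient term and the production I couple the $L^p$-estimate with evolution estimates for the gradient energies $\int_\Omega|\nabla v|^{2\gamma}$ and $\int_\Omega|\nabla w|^{2\gamma}$, obtained by testing the second and third equations, and with Gagliardo--Nirenberg interpolation to absorb $\int_\Omega u^{p+1}$ and $\int_\Omega u^p\chi(v)^2|\nabla v|^2$ into the diffusion dissipation $\int_\Omega u^{p-2}|\nabla u|^2$ and the repulsive dissipation $\beta\int_\Omega u^p\xi(w)^2|\nabla w|^2$. After the various Young splittings --- carried out with the free parameter $\delta$ that balances the attraction against the repulsion and diffusion --- the sign-indefinite remainder is a quadratic form whose nonnegativity is exactly \eqref{condiab}: positivity of the denominator $2\delta(\beta-n)-\delta^2-\tfrac n2$ forces $\delta\in J$ and $\beta>n+\sqrt{n/2}$, while the lower bound on $\alpha$ built from the discriminant $D$ guarantees the form is dominated. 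Under \eqref{condiab} one thus obtains $\frac{d}{dt}E(t)+cE(t)\le C$ for a combined functional $E$ comprising $\int_\Omega u^p$ and the two gradient energies, and Gronwall's inequality yields a uniform bound on $E$, hence on $\|u(\cdot,t)\|_{L^p(\Omega)}$.

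With $\|u\|_{L^p}$ controlled for large $p$ I upgrade to $\|u(\cdot,t)\|_{L^\infty(\Omega)}\le C$ by a Moser--Alikakos iteration, which through the extensibility criterion forces $\Tmax=\infty$; Neumann heat-semigroup smoothing applied to the $v$- and $w$-equations then gives the uniform $\|v\|_{W^{1,\infty}(\Omega)}$, $\|w\|_{W^{1,\infty}(\Omega)}$ bounds, completing the statement. I expect the main obstacle to be precisely the loss of the logistic absorption $-\mu u^2$ that drove the argument in~\cite{CMY-2020}: here the super-linear production generated by the attraction must be dominated using only diffusion and the repulsion-induced dissipation and sink, and it is this tight competition that makes the structural restriction \eqref{condiab} unavoidable. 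The most delicate bookkeeping is to verify that $J$ is nonempty and that a suitable $\delta$ exists once $\beta>n+\sqrt{n/2}$, since the entire closing of the estimate hinges on the admissible range of the balancing exponent.
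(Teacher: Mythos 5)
Your overall architecture (local existence, reduction to an $L^p$ bound for some $p>\tfrac n2$, bootstrap to $L^\infty$ and $W^{1,\infty}$ via the extensibility criterion) matches the paper, and the first and last steps are indeed Lemmas~\ref{LSE} and~\ref{bddu}. The core of the argument, however, has a genuine gap. You test the first equation with $u^{p-1}$ and substitute $\Delta v=v_t+v-u$, which produces two terms you cannot control: the contribution $\tfrac{p-1}{p}\int_\Omega u^p\chi(v)v_t$ (in the fully parabolic case $v_t$ is not a spatial quantity and is not absorbed by any of your dissipation terms), and the superlinear production $+\tfrac{p-1}{p}\int_\Omega u^{p+1}\chi(v)$ with the \emph{bad} sign. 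Your proposed cure for the latter --- absorbing $\int_\Omega u^{p+1}$ into $\int_\Omega u^{p-2}|\nabla u|^2$ by Gagliardo--Nirenberg using only the conserved $L^1$ norm --- fails for $n\ge2$: writing $\int_\Omega u^{p+1}=\|u^{p/2}\|_{L^{2(p+1)/p}}^{2(p+1)/p}$ and interpolating between $\|\nabla u^{p/2}\|_{L^2(\Omega)}$ and $\|u^{p/2}\|_{L^{2/p}(\Omega)}$ gives the gradient exponent $\tfrac{2pn}{n(p-1)+2}$, which is $\ge 2$ precisely when $n\ge2$, so no Young absorption is possible (this is the mechanism behind Keller--Segel blow-up). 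Likewise, the positive term $\alpha\int_\Omega u^p\chi(v)^2|\nabla v|^2$ cannot be dominated by $\beta\int_\Omega u^p\xi(w)^2|\nabla w|^2$, since $\nabla v$ and $\nabla w$ are gradients of different signals and admit no pointwise comparison.

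The missing idea is the weighted functional of \eqref{Deff}: the paper studies $\int_\Omega u^pf$ with $f=\exp\big(-r\int_{\eta_1}^{v}\chi-\sigma\int_{\eta_2}^{w}\xi\big)$. Differentiating $f$ in time (Lemma~\ref{DIupf1}) simultaneously (i) cancels the otherwise uncontrollable $v_t$, $w_t$ contributions, (ii) converts the $u$-production in the signal equations into $-r\int_\Omega u^{p+1}f\chi(v)-\sigma\int_\Omega u^{p+1}f\xi(w)\le0$, so the superlinear term appears with the \emph{good} sign and the remaining pieces are bounded by $(r\chi_0+\sigma\xi_0)\int_\Omega u^pf$ via \eqref{chibound}, \eqref{xibound}, and (iii) reduces all gradient terms to a single pointwise quadratic form in $u^{-1}|\nabla u|$, $\chi(v)|\nabla v|$, $\xi(w)|\nabla w|$, whose negative definiteness (Sylvester criterion, Lemma~\ref{A_1A_2}) is exactly what \eqref{condiab} guarantees --- so \eqref{condiab} is not a by-product of Young splittings as you suggest. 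Finally, the superlinear absorption that replaces the logistic term comes from retaining a fraction $\ep_0$ of the diffusion dissipation and applying Gagliardo--Nirenberg the other way around, namely $\int_\Omega u^pf\le c\|\nabla u^{p/2}\|_{L^2(\Omega)}^{2\theta}+c$ with $\theta\in(0,1)$, which yields $\frac{d}{dt}\int_\Omega u^pf\le c_5\int_\Omega u^pf-c_6\big(\int_\Omega u^pf\big)^{1/\theta}+c_7$ and hence the uniform bound. Without the weight none of these three mechanisms is available, so the proof as proposed does not close.
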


\begin{remark}
The condition \eqref{condiab} is identical to that in the previous literature, 
although the logistic term 
disappears.
\end{remark}
\vspace{5mm}

The strategy for the proof of Theorem \ref{mainthm} is
to show $L^p$-boundedness of $u$ 
with some $p>\frac{n}{2}$. 
The key is to derive the differential inequality
%
    \begin{align} \label{DIupf}
         \frac{d}{dt} \int_\Omega u^pf(x, t)\,dx
    \le c_1\int_\Omega u^p f(x, t)\,dx-c_2
         \Big(\int_\Omega u^p f(x, t)\,dx\Big)^{\frac{1}{\theta}}+c_3
    \end{align}
with some constants $c_1, c_2, c_3>0$, $\theta\in(0,1)$ and some
function $f$ defined by using $v, w$. 
In our previous work including the logistic term $\mu u(1-u)$,  the differential inequality 
    \begin{align*}
         \frac{d}{dt} \int_\Omega u^pf(x, t)\,dx
    \le c_1\int_\Omega u^p f(x, t)\,dx-\mu p |\Omega|^{-\frac{1}{p}}
         \Big(\int_\Omega u^p f(x, t)\,dx\Big)^{1+\frac{1}{p}}
    \end{align*}
was established. 
The second term on the right-hand side of this inequality, 
which is important in proving boundedness, 
is derived from the effect of the logistic term. 
In other words, in the absence of a logistic source, 
the proof in the previous work fails and we cannot obtain boundedness.
Thus we will show the differential inequality \eqref{DIupf} with 
the help of the diffusion term rather than the logistic term.
\vspace{2.5mm}

This paper is organized as follows. 
In Section \ref{Sec2} we collect some preliminary facts about 
local existence of classical solutions to \eqref{ARC} and 
a lemma such that an $L^p$-estimate for $u$ 
with some $p>\frac{n}{2}$ implies an $L^\infty$-estimate for $u$. 
Section \ref{Sec3} is devoted to the proof of global existence 
and boundedness (Theorem \ref{mainthm}).
\vspace{5mm}


\section{Preliminaries} \label{Sec2}

In this section we give some lemmas which will be used later. 
We first present the result obtained by a similar argument in \cite[Lemma 2.2]{F-2015} 
(see also \cite[Lemma 2.1 and Remark 2.2]{MY-2017}),  which will be applied to the second and third equations in \eqref{ARC}. 
%
\begin{lem}\label{LB}
Let $T>0$. 
Let $u \in C^0(\cl{\Omega} \times [0,T))$ be a nonnegative function 
such that, with some $m>0$, $\int_\Omega u(\cdot, t)=m$ for all 
$t \in [0, T)$. 
If $z_0 \in C^0(\cl{\Omega})$, 
$z_0\ge0$ in $\cl{\Omega}$ and 
$z \in 
C^0(\cl{\Omega} \times [0,T)) \cap C^{2,1}(\cl{\Omega} \times (0,T))$ 
is a classical solution of
    \begin{align*}
        \begin{cases}
        z_t=\Delta z-z+u,
        &x \in \Omega,\ t>0,
    \\[1.05mm]
        \nabla z \cdot \nu=0, 
        &x \in \pa \Omega,\ t>0,
    \\[1.05mm]
        z(x, 0)=z_0(x), 
        &x \in \Omega,
        \end{cases}
    \end{align*}
then for all $t \in (0, T)$, 
%
\begin{align*}
   &\inf_{x \in \Omega} z(x, t) \ge \eta
\end{align*}
%
with
\begin{align}\label{eta}
\eta:=\sup_{\tau>0}\Big(\min
         \Big\{e^{-2\tau}\min_{x \in \cl{\Omega}}z_0(x),\ 
         c_0m(1-e^{-\tau})\Big\}\Big)\ge0,
\end{align}
where $c_0>0$ is a lower bound for the fundamental solution of 
$\varphi_t=\Delta\varphi-\varphi$ with Neumann boundary condition. 
\end{lem}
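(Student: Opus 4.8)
The plan is to read off the two mechanisms keeping $z$ positive from the variation-of-constants representation, and to balance them against each other through the free parameter $\tau$. Let $(e^{\sigma\Delta})_{\sigma\ge0}$ denote the Neumann heat semigroup, with kernel $G(x,y,\sigma)\ge0$ satisfying $\int_\Omega G(x,y,\sigma)\,dy=1$. Writing
\[
z(\cdot,t)=e^{-t}e^{t\Delta}z_0+\int_0^t e^{-(t-s)}e^{(t-s)\Delta}u(\cdot,s)\,ds ,
\]
both summands are nonnegative, since $z_0\ge0$, $u\ge0$ and $e^{\sigma\Delta}$ is order preserving; this already recovers $z\ge0$ and reduces the task to a quantitative bound. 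Fix $\tau>0$. It suffices to show $z(x,t)\ge\min\{e^{-2\tau}\min_{\cl{\Omega}}z_0,\ c_0 m(1-e^{-\tau})\}$ for all $x\in\Omega$ and all $t\in(0,T)$, because taking the supremum over $\tau$ then produces exactly $\eta$.

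For small times $0<t\le\tau$ I would keep only the initial-data term. Since $G(x,\cdot,t)$ is nonnegative with unit integral, $e^{t\Delta}z_0\ge\min_{\cl{\Omega}}z_0$ pointwise, whence $z(x,t)\ge e^{-t}\min_{\cl{\Omega}}z_0\ge e^{-2\tau}\min_{\cl{\Omega}}z_0$. Equivalently one may compare $z$ with the spatially constant subsolution $\underline z(t)=e^{-2t}\min_{x\in\cl{\Omega}}z_0(x)$: its subsolution inequality reduces to $-e^{-2t}\min_{\cl{\Omega}}z_0\le u$, which is immediate from $u\ge0$, while $\nabla\underline z\cdot\nu=0$ and $\underline z(0)\le z_0$, so the comparison principle gives the same bound. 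In this regime the minimum is therefore attained with room to spare.

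For large times $t\ge\tau$ I would instead discard the initial-data term and exploit the conserved mass. Restricting the source integral to the window $s\in[t-\tau,t]$ and using the lower bound $c_0$ for the Neumann heat kernel (equivalently, the fundamental solution of $\varphi_t=\Delta\varphi-\varphi$ is at least $c_0e^{-(t-s)}$) together with $\int_\Omega u(\cdot,s)=m$, I obtain
\[
z(x,t)\ge\int_{t-\tau}^{t}\int_\Omega e^{-(t-s)}G(x,y,t-s)\,u(y,s)\,dy\,ds
\ge c_0 m\int_{t-\tau}^{t}e^{-(t-s)}\,ds=c_0 m(1-e^{-\tau}),
\]
which is precisely the second entry of the minimum. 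Combining the two regimes yields, for the fixed $\tau$, the desired bound for every $t\in(0,T)$, and passing to the supremum over $\tau>0$ gives $\inf_{x\in\Omega}z(x,t)\ge\eta$ uniformly in $t$.

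The positivity and the small-time estimate are routine; the step I expect to carry the real weight is the large-time estimate. The crux is that the pointwise lower bound of the heat kernel, integrated against $u(\cdot,s)$, converts the \emph{unknown} spatial profile of $u$ into the clean constant $c_0 m$ — it is exactly here that the mass-conservation hypothesis $\int_\Omega u(\cdot,s)=m$ is indispensable, and where the choice of the window $[t-\tau,t]$ is tuned so that the decaying factor $e^{-(t-s)}$ integrates to $1-e^{-\tau}$. One must also keep track of the regularity needed to legitimize the representation formula and the comparison argument, namely $z\in C^0(\cl{\Omega}\times[0,T))\cap C^{2,1}(\cl{\Omega}\times(0,T))$ with continuity up to $t=0$.
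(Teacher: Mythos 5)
Your argument is correct and is essentially the proof the paper relies on: the paper gives no proof of this lemma, deferring to \cite{F-2015} and \cite{MY-2017}, and the argument there is exactly yours --- the variation-of-constants representation, order preservation of the Neumann heat semigroup for the initial-data term on $0<t\le\tau$ (your constant subsolution $e^{-2t}\min_{\cl{\Omega}}z_0$ is precisely what accounts for the factor $e^{-2\tau}$ rather than $e^{-\tau}$ in \eqref{eta}), and the pointwise kernel lower bound combined with mass conservation on the window $s\in[t-\tau,t]$ for $t\ge\tau$. The one point to state more carefully is the range of times on which the bound $G(x,y,\sigma)\ge c_0$ is invoked: the Neumann heat kernel tends to $0$ off the diagonal as $\sigma\to0^{+}$, so no positive $c_0$ works uniformly on all of $\sigma\in(0,\tau]$, and the constant must be understood as a lower bound on a time window bounded away from $0$ (with the $s$-integration restricted accordingly). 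This imprecision is inherited from the lemma's own description of $c_0$ rather than introduced by you, and it affects only the explicit constant, not the structure or validity of the argument.
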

%
We next introduce a result on local existence of classical solutions 
to \eqref{ARC}.
%
\begin{lem} \label{LSE}
 Let $n \ge 1$ and let $(u_0, v_0, w_0)$ fulfill \eqref{KF}, \eqref{KF2}. 
Put $m_0:=\int_\Omega u_0$ and 
let $\eta_1, \eta_2\ge0$ be 
constants given by \eqref{eta} with $(z_0, m)=(v_0, m_0)$ 
and $(z_0, m)=(w_0, m_0)$, respectively. 
Assume that  
 $\chi \in C^{1+\theta_1}([\eta_1,\infty))$, 
$\xi \in C^{1+\theta_2}([\eta_2,\infty))$ 
 with some $\theta_1, \theta_2 \in (0,1)$.
 Then there exists $\Tmax \in (0,\infty]$ such that 
 \eqref{ARC} admits a unique classical solution 
 $(u, v, w)$ such that
%
    \begin{align*}
        &u \in C^0(\cl{\Omega} \times [0,\Tmax)) 
                 \cap C^{2,1}(\cl{\Omega} \times (0,\Tmax)),
    \\
        &v,w \in C^0(\cl{\Omega} \times [0,\Tmax)) 
                 \cap C^{2,1}(\cl{\Omega} \times (0,\Tmax)) 
                 \cap L^\infty_\mathrm{loc}([0,\Tmax); W^{1,\infty}(\Omega))
    \end{align*}
 and $u$ has positivity as well as the mass conservation property
\begin{align}\label{mcp}
\int_\Omega u(\cdot, t)=\int_\Omega u_0
\end{align}
for all $t \in (0, \Tmax)$,
whereas $v$ and $w$ satisfy the lower estimates
   \begin{align}
\inf_{x \in \Omega} v(x, t) \ge \eta_1,\quad
\inf_{x \in \Omega} w(x, t) \ge \eta_2\label{LEw}
\end{align}
for all $t \in (0, \Tmax)$. 
 Moreover, 
%
    \begin{align} \label{BU}
            {\it if}\ \Tmax<\infty,
    \quad 
            {\it then}\ \limsup_{t \nearrow \Tmax} 
                           \left(\|u(\cdot,t)\|_{L^\infty(\Omega)}
                           +\|v(\cdot,t)\|_{W^{1, \infty}(\Omega)}
                           +\|w(\cdot,t)\|_{W^{1, \infty}(\Omega)}\right)=\infty.
    \end{align}
\end{lem}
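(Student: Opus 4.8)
The plan is to build the solution by a contraction-mapping (fixed-point) argument based on the Duhamel formulation, after first disposing of the technical nuisance that $\chi$ and $\xi$ are defined only on $[\eta_1,\infty)$ and $[\eta_2,\infty)$. Since the arguments of $\chi,\xi$ are $v,w$, whose lower bounds \eqref{LEw} are not yet available at the construction stage, I would first extend $\chi,\xi$ to functions $\tilde\chi,\tilde\xi\in C^{1+\theta}([0,\infty))$ (keeping them positive with finite Hölder seminorm, e.g.\ by a smooth constant continuation below the thresholds), solve the system with $\chi,\xi$ replaced by $\tilde\chi,\tilde\xi$, and recover \eqref{ARC} only at the end once the lower bounds force $v\ge\eta_1$, $w\ge\eta_2$.

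First I would fix $T\in(0,1)$ and a radius $R>0$, and look for a fixed point of the map $\Phi=(\Phi_1,\Phi_2,\Phi_3)$ defined via the variation-of-constants formula by $\Phi_1(u,v,w)(t)=e^{t\Delta}u_0-\int_0^t e^{(t-s)\Delta}\nabla\cdot\big(u\tilde\chi(v)\nabla v-u\tilde\xi(w)\nabla w\big)\,ds$ and $\Phi_2(u,v,w)(t)=e^{t(\Delta-1)}v_0+\int_0^t e^{(t-s)(\Delta-1)}u\,ds$, with $\Phi_3$ analogous, inside the closed ball of radius $R$ in the Banach space $C^0(\cl{\Omega}\times[0,T])\times\big(C^0([0,T];W^{1,\infty}(\Omega))\big)^2$. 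The standard $L^p$--$L^q$ smoothing estimates for the Neumann heat semigroup on $\Omega$ show that the nonlinear integral terms pick up a positive power of $T$ (one chooses the integrability exponents so that the time singularities stay integrable near $0$); combined with the local Lipschitz continuity of $\tilde\chi,\tilde\xi$ coming from $\chi,\xi\in C^{1+\theta}$, this makes $\Phi$ a self-map of the ball and a contraction once $T=T(R)$ is small, giving a unique mild solution on a maximal interval $[0,\Tmax)$. The extensibility/blow-up alternative \eqref{BU} then follows from the usual restarting argument, since the local existence time depends only on $\|u_0\|_{L^\infty}$, $\|v_0\|_{W^{1,\infty}}$ and $\|w_0\|_{W^{1,\infty}}$.

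I would then upgrade the mild solution to a classical one by a parabolic bootstrap: with $u\in C^0$, the linear equations for $v,w$ yield, by Schauder and semigroup regularity, Hölder continuity and then $C^{2,1}$ smoothness on $(0,\Tmax)$; the resulting Hölder coefficients $\chi(v)\nabla v$ and $\xi(w)\nabla w$ turn the $u$-equation into a linear divergence-form parabolic equation with Hölder data, so $u\in C^{2,1}$ as well. Nonnegativity of $v,w$ follows from $u\ge0$ and $v_0,w_0\ge0$ by the maximum principle, and strict positivity of $u$ on $(0,\Tmax)$ from the strong maximum principle applied to the now-linear $u$-equation together with $u_0\not\equiv0$. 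Integrating the $u$-equation over $\Omega$ and using $\nabla u\cdot\nu=\nabla v\cdot\nu=\nabla w\cdot\nu=0$ makes all three terms vanish, giving the mass conservation \eqref{mcp}; feeding $m=m_0$ into Lemma~\ref{LB} with $(z_0,m)=(v_0,m_0)$ and $(w_0,m_0)$ then yields the lower bounds \eqref{LEw}.

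I expect the main obstacle to be exactly this last coupling: the lower bounds \eqref{LEw} needed to make $\chi(v),\xi(w)$ meaningful are themselves consequences of the constructed solution, so the whole construction must be carried out for the extended system first, and the bounds $v\ge\eta_1$, $w\ge\eta_2$ used \emph{a posteriori} to check that $\tilde\chi(v)=\chi(v)$ and $\tilde\xi(w)=\xi(w)$, whence the constructed triplet in fact solves \eqref{ARC} and inherits all the stated properties. By contrast, the semigroup contraction estimates and the Schauder bootstrap, while lengthy, are routine.
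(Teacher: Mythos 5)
Your proposal is correct and follows essentially the same route as the paper, which simply invokes the standard contraction-mapping argument of Tao--Wang for local existence and the blow-up criterion, obtains \eqref{mcp} by integrating the first equation, and derives \eqref{LEw} from Lemma~\ref{LB}. Your extra care about extending $\chi,\xi$ below the thresholds $\eta_1,\eta_2$ before running the fixed-point argument is a detail the paper leaves implicit, but it does not change the method.
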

\begin{proof}
 Using a standard argument based on the contraction mapping principle 
 as in~\cite[Lemma 3.1]{TW-2013}, we can show local existence and 
 blow-up criterion \eqref{BU}. 
Note that the mass conservation property \eqref{mcp} can be obtained 
 by integrating the first equation in \eqref{ARC} 
 over $\Omega \times (0, t)$ for $t\in(0, \Tmax)$, and that the lower estimates \eqref{LEw} follow from Lemma~\ref{LB}. 
\end{proof}

In the following we assume that $\Omega \subset \R^n$ 
$(n \ge 2)$ 
is a bounded domain with smooth boundary, 
$\chi, \xi$ fulfill \eqref{chiclass}, \eqref{xiclass}, respectively,  
$(u_0, v_0, w_0)$ satisfies \eqref{KF}, \eqref{KF2}. Then we
denote by $(u, v, w)$ the local classical
solution of \eqref{ARC} given in Lemma \ref{LSE} and 
by $\Tmax$ its maximal existence time. 
\vspace{2mm}

We next give the following lemma which tells 
us a strategy to prove global existence and boundedness. 
%
\begin{lem} \label{bddu}
 Assume that $\chi, \xi$ fulfill that $\chi(s) \le K_1$,   
 $\xi(s) \le K_2$ for all $s \ge 0$ with some $K_1, K_2>0$, respectively.  
 If there exist $K_3>0$ and $p>\frac{n}{2}$ satisfying
%
    \begin{align*}
            \|u(\cdot, t)\|_{L^p(\Omega)} \le K_3
    \end{align*}
 for all $t \in (0, \Tmax)$, then we have 
%
    \begin{align*} 
            \|u(\cdot,t)\|_{L^\infty(\Omega)}
                           +\|v(\cdot,t)\|_{W^{1, \infty}(\Omega)}
                           +\|w(\cdot,t)\|_{W^{1, \infty}(\Omega)} \le C
    \end{align*}
 for all $t \in (0, \Tmax)$ with some $C>0$.
\end{lem}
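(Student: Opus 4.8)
The plan is to run the standard two-stage smoothing/bootstrap argument based on the variation-of-constants representations and the $L^p$–$L^q$ estimates for the Neumann heat semigroup, using the hypothesis $p>\frac n2$ precisely where it guarantees control of the chemical gradients in a space strictly better than $L^n$.

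First I would estimate the gradients of $v$ and $w$. Writing $v(\cdot,t)=e^{t(\Delta-1)}v_0+\int_0^t e^{(t-s)(\Delta-1)}u(\cdot,s)\,ds$ and differentiating, the smoothing estimate $\|\nabla e^{\tau(\Delta-1)}f\|_{L^q(\Omega)}\le C(1+\tau^{-\frac12-\frac n2(\frac1p-\frac1q)})e^{-\tau}\|f\|_{L^p(\Omega)}$ together with $\|u(\cdot,s)\|_{L^p(\Omega)}\le K_3$ yields a uniform-in-$t$ bound $\|\nabla v(\cdot,t)\|_{L^q(\Omega)}\le C$ (and likewise for $w$) on $(0,\Tmax)$, provided the exponent of $\tau$ exceeds $-1$, i.e. $\frac1q>\frac1p-\frac1n$; the initial-data term is controlled by $\|v_0\|_{W^{1,\infty}(\Omega)}$ via contractivity of the semigroup, and the factor $e^{-\tau}$ coming from the $-v$ term makes the time integral converge also at $\tau=\infty$. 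Since $p>\frac n2$ one has $\frac1p-\frac1n<\frac1n$, so this range admits some fixed $q>n$, which I would now fix.

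Next I would bootstrap the first equation. Using $u(\cdot,t)=e^{t\Delta}u_0-\int_0^t e^{(t-s)\Delta}\nabla\cdot\Phi(\cdot,s)\,ds$ with $\Phi:=u\chi(v)\nabla v-u\xi(w)\nabla w$, the boundedness $\chi\le K_1$, $\xi\le K_2$ and Hölder's inequality give $\|\Phi\|_{L^r(\Omega)}\le(K_1+K_2)\|u\|_{L^{p_k}(\Omega)}\big(\|\nabla v\|_{L^q(\Omega)}+\|\nabla w\|_{L^q(\Omega)}\big)$ with $\frac1r=\frac1{p_k}+\frac1q$, where $\|u(\cdot,t)\|_{L^{p_k}(\Omega)}\le C_k$ is the bound available at stage $k$ (starting from $p_0=p$). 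The estimate $\|e^{\tau\Delta}\nabla\cdot\Phi\|_{L^{p_{k+1}}(\Omega)}\le C(1+\tau^{-\frac12-\frac n2(\frac1r-\frac1{p_{k+1}})})e^{-\lambda\tau}\|\Phi\|_{L^r(\Omega)}$, valid for the mean-free argument $\nabla\cdot\Phi$, then produces a finite bound on $\|u(\cdot,t)\|_{L^{p_{k+1}}(\Omega)}$ as soon as $\frac1{p_{k+1}}>\frac1r-\frac1n=\frac1{p_k}+\frac1q-\frac1n$, while the homogeneous term stays bounded by $\|u_0\|_{L^\infty(\Omega)}$. Because $q>n$, the number $\frac1n-\frac1q$ is a fixed positive quantity, so each stage lowers the reciprocal exponent by at least this amount; hence after finitely many steps $\frac1{p_k}$ drops below $\frac1n-\frac1q$ and one reaches a uniform bound $\|u(\cdot,t)\|_{L^\infty(\Omega)}\le C$ on $(0,\Tmax)$.

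Finally, with $u$ bounded in $L^\infty$ I would return to the $v$- and $w$-equations and apply the same gradient smoothing estimate, now with source in $L^\infty\subset L^{q'}$ for some $q'>n$, to obtain $\|\nabla v(\cdot,t)\|_{L^\infty(\Omega)}+\|\nabla w(\cdot,t)\|_{L^\infty(\Omega)}\le C$; together with the elementary $L^\infty$ bounds on $v,w$ themselves this gives the claimed $W^{1,\infty}$ control. I expect the main obstacle to be organizing the iteration so that the constants $C_k$ remain uniform in $t\in(0,\Tmax)$ across the finitely many stages, and handling the fact that $e^{t\Delta}$ (unlike $e^{t(\Delta-1)}$) preserves constants; the latter is dealt with by exploiting that the source $\nabla\cdot\Phi$ is mean-free, so that the relevant semigroup estimate still carries the decay factor $e^{-\lambda\tau}$ on the orthogonal complement of the constants.
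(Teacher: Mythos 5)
Your proposal is correct: the two-stage semigroup argument (first $L^p$-to-$L^q$ gradient bounds for $v,w$ with $q>n$ via $p>\frac n2$, then a finite bootstrap on $u$ through the Duhamel formula using the mean-free/divergence-form smoothing estimates) is exactly the standard proof of this boundedness criterion, and it is the argument behind the paper's own (cited-only) proof, which simply refers to \cite[Lemma~2.3]{CMY-2020}. The only cosmetic imprecision is that each bootstrap step lowers $\frac1{p_k}$ by slightly less than $\frac1n-\frac1q$ rather than ``at least'' that amount, which does not affect termination in finitely many steps.
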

\begin{proof} 
See~\cite[Lemma~2.3]{CMY-2020}; 
note that it is rather easier to show without logistic term.
\end{proof}
\vspace{5mm}


\section{Proof of Theorem \ref{mainthm}} \label{Sec3}

Thanks to Lemma \ref{bddu}, it is sufficient to derive 
an $L^p$-estimate for $u$ with some $p>\frac{n}{2}$. 
%
 In order to establish the estimate for $u$ 
 we introduce the function 
 $f=f(x,t)$ by
    \begin{align}\label{Deff}
             f(x,t):= \exp\Big(
              -r\int_{\eta_1}^{v(x,t)}\chi(s)\,ds
              -\sigma \int_{\eta_2}^{w(x,t)} \xi(s)\,ds
                             \Big),
    \end{align}
%
where $r, \sigma>0$ are some constants 
which will be fixed later. 
Here the function $f$ is finite valued, 
because integrability in \eqref{Deff} is assured  
by \eqref{chiclass} and \eqref{xiclass} together with \eqref{LEw}. 
\vspace{2mm}
  
Then we give the following lemma which was proved 
in~\cite[Lemma~3.2]{CMY-2020} with $\mu=0$. 
Although in the literature we used the function $f$ with $\eta_1=\eta_2=0$, 
the conclusion of the following lemma does not depend on 
the choice of $(\eta_1, \eta_2)$.

%
\begin{lem} \label{DIupf1}
 Let $r, \sigma>0$. 
 Then for all $p>1$, we have 
%
    \begin{align}\label{est0}
            \frac{d}{dt}\int_\Omega u^pf
        &= I_1+I_2+I_3
            -r\int_\Omega u^pf\chi(v)(-v+u)
            -\sigma\int_\Omega u^pf\xi(w)(-w+u)
    \end{align}
 for all $t \in (0, \Tmax)$, where
%
    \begin{align*}
             I_1 
        &:=  p\int_\Omega u^{p-1}f\nabla \cdot 
             \Big(\nabla u-u\chi(v)\nabla v+u\xi(w)\nabla w\Big),
    \\[1mm]
             I_2 
        &:=  -r\int_\Omega u^pf\chi(v)\Delta v,
    \\[1mm]
             I_3 
        &:=  -\sigma \int_\Omega u^p f\xi(w)\Delta w.
    \end{align*}
\end{lem}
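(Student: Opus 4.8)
The plan is to prove \eqref{est0} by a completely direct computation: differentiate the functional $\int_\Omega u^pf$ in time, insert the evolution equations \eqref{ARC} for $u$, $v$, $w$, and then rearrange the resulting terms into the three pieces $I_1,I_2,I_3$ together with the two reaction-type remainders. Since Lemma~\ref{LSE} provides a classical solution with $u\in C^{2,1}(\cl{\Omega}\times(0,\Tmax))$ and $v,w\in C^{2,1}(\cl{\Omega}\times(0,\Tmax))$, and since $f$ is finite valued and smooth in $(x,t)$ by \eqref{chiclass}, \eqref{xiclass} and the lower bounds \eqref{LEw}, all the manipulations below are legitimate; in particular, differentiation under the integral sign is justified on each compact time subinterval of $(0,\Tmax)$, and the nonnegativity of $u$ makes $\pa_t u^p=pu^{p-1}u_t$ valid pointwise for $p>1$.

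First I would write $\frac{d}{dt}\int_\Omega u^pf=\int_\Omega(pu^{p-1}u_tf+u^pf_t)$. For the first summand I use the divergence form of the $u$-equation in \eqref{ARC}, namely $u_t=\nabla\cdot(\nabla u-u\chi(v)\nabla v+u\xi(w)\nabla w)$, which turns $p\int_\Omega u^{p-1}f\,u_t$ directly into $I_1$. It is worth emphasizing that no integration by parts is performed at this stage: the term $I_1$ is deliberately kept as the volume integral of a divergence, so that no boundary contributions arise here (the homogeneous Neumann conditions and the actual integration by parts are exploited only later, outside this lemma).

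For the second summand I would compute $f_t$ by the chain rule together with the fundamental theorem of calculus. Since $\eta_1,\eta_2$ are constants, $\frac{\pa}{\pa t}\int_{\eta_1}^{v}\chi(s)\,ds=\chi(v)v_t$ and likewise for the $w$-integral, so from \eqref{Deff} we get $f_t=-f\big(r\chi(v)v_t+\sigma\xi(w)w_t\big)$. Substituting $v_t=\Delta v-v+u$ and $w_t=\Delta w-w+u$ from \eqref{ARC} and expanding yields
\[
u^pf_t=-ru^pf\chi(v)\Delta v-\sigma u^pf\xi(w)\Delta w-ru^pf\chi(v)(-v+u)-\sigma u^pf\xi(w)(-w+u).
\]
Integrating over $\Omega$, the first two terms are precisely $I_2$ and $I_3$, while the last two are exactly the reaction-type remainders appearing in \eqref{est0}; adding the contribution $I_1$ coming from the first summand produces \eqref{est0}.

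The computation itself carries essentially no obstacle, so the only points deserving a word of care are the well-definedness and smoothness of $f$ (guaranteed by the $L^1$-integrability in \eqref{chiclass}, \eqref{xiclass} and the positive lower bounds \eqref{LEw}, which keep $v,w$ away from the singular endpoints of $\chi,\xi$) and the justification of differentiating under the integral sign, both of which follow from the classical regularity of $(u,v,w)$ on $(0,\Tmax)$. Finally, to see the independence from the choice of $(\eta_1,\eta_2)$ asserted before the lemma, I would note that replacing $\eta_1,\eta_2$ by other admissible constants only multiplies $f$ by a fixed positive factor; since every term in \eqref{est0}, including $I_1,I_2,I_3$ and both remainders, is linear in $f$, both sides of the identity scale by that same factor and the identity holds in the identical form.
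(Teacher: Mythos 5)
Your computation is correct and is exactly the standard derivation behind this identity: the paper does not reprove the lemma but simply cites \cite[Lemma~3.2]{CMY-2020} (with $\mu=0$), where the same splitting $\frac{d}{dt}\int_\Omega u^pf=\int_\Omega pu^{p-1}fu_t+\int_\Omega u^pf_t$ with $f_t=-f(r\chi(v)v_t+\sigma\xi(w)w_t)$ is carried out. Your closing observation that all terms are linear in $f$, so the identity is insensitive to the choice of $(\eta_1,\eta_2)$, matches the remark the paper makes just before stating the lemma.
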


We next state an estimate for $I_1+I_2+I_3$ in the following lemma. 
\begin{lem}
 Let $r, \sigma>0$, $\ep \in [0, 1)$ and put 
\begin{align*} 
\textsf{x} := u^{-1}|\nabla u|,\quad \textsf{y} := \chi(v)|\nabla v|,\quad
 \textsf{z} := \xi(w)|\nabla w|.
\end{align*} 
 Then for all $p>1$, the following estimate holds\/{\rm :}
    \begin{align}\label{est1}
               I_1+I_2+I_3
        &\le -\ep p(p-1) \int_\Omega u^pf\textsf{x}^2\notag\\
&\quad\,+\int_\Omega u^pf \cdot
            (a_1(\ep) \textsf{x}^2+a_2 \textsf{xy}+a_3 \textsf{xz}+a_4 \textsf{y}^2+a_5 \textsf{yz}+a_6 \textsf{z}^2)
    \end{align}
 for all $t \in (0, \Tmax)$, where
%
    \begin{alignat*}{3}
        a_1(\ep) &:= -(1-\ep)p(p-1), & 
             &\qquad & 
        a_2 &:= p(p+2r-1),
    \\ 
        a_3 &:= p(p+2\sigma-1), &
             &\qquad & 
        a_4 &:= -r(p+r+\alpha),
    \\
        a_5 &:= pr+p\sigma+2r \sigma, &
             &\qquad &
        a_6 &:= -\sigma(-p+\sigma+\beta).
    \end{alignat*}
\end{lem}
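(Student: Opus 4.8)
The plan is to integrate by parts in each of $I_1$, $I_2$, $I_3$ and to reduce the right-hand side of \eqref{est0} to volume integrals of $u^pf$ tested against the squares $\textsf{x}^2,\textsf{y}^2,\textsf{z}^2$ and against the three sign-indefinite scalar products $\nabla u\cdot\chi(v)\nabla v$, $\nabla u\cdot\xi(w)\nabla w$ and $\chi(v)\nabla v\cdot\xi(w)\nabla w$. The homogeneous Neumann conditions $\nabla u\cdot\nu=\nabla v\cdot\nu=\nabla w\cdot\nu=0$ annihilate every boundary integral, and the identity $\nabla f=-f\bigl(r\chi(v)\nabla v+\sigma\xi(w)\nabla w\bigr)$, obtained by differentiating \eqref{Deff}, drives the whole computation: it re-expresses $\nabla f$ through $\chi(v)\nabla v$ and $\xi(w)\nabla w$, so that after each integration by parts only those three products survive (note also $f>0$ and $u>0$, so the integrand weights $u^{p-1}f$ are nonnegative).

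First I would treat $I_1$: one integration by parts, inserting $\nabla(u^{p-1}f)=(p-1)u^{p-2}f\nabla u+u^{p-1}\nabla f$ together with the identity for $\nabla f$, produces the single genuinely negative square $-p(p-1)\int_\Omega u^pf\,\textsf{x}^2$ plus several scalar-product terms. I would treat $I_2=-r\int_\Omega u^pf\chi(v)\Delta v$ and $I_3=-\sigma\int_\Omega u^pf\xi(w)\Delta w$ the same way; here differentiating $\chi(v)$ and $\xi(w)$ additionally generates $r\int_\Omega u^pf\chi'(v)|\nabla v|^2$ and $\sigma\int_\Omega u^pf\xi'(w)|\nabla w|^2$, and this is exactly where \eqref{chiineq} and \eqref{xiineq} enter, in the pointwise form $\chi'(v)|\nabla v|^2\le-\alpha\chi(v)^2|\nabla v|^2=-\alpha\textsf{y}^2$ and $\xi'(w)|\nabla w|^2\le-\beta\textsf{z}^2$. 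Collecting coefficients, the pure squares assemble into $-p(p-1)\textsf{x}^2$, into $-r(p+r+\alpha)\textsf{y}^2=a_4\textsf{y}^2$ (from $-pr$ in $I_1$, $-r^2$ in $I_2$, $-r\alpha$ from the $\chi'$ bound) and into $-\sigma(-p+\sigma+\beta)\textsf{z}^2=a_6\textsf{z}^2$ (from $+p\sigma$ in $I_1$, $-\sigma^2$ in $I_3$, $-\sigma\beta$ from the $\xi'$ bound); the trivial splitting $-p(p-1)=-\ep p(p-1)-(1-\ep)p(p-1)$ then isolates the reserved term $-\ep p(p-1)\int_\Omega u^pf\,\textsf{x}^2$ and leaves $a_1(\ep)\textsf{x}^2$ for the quadratic form.

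The only delicate step, and where I expect the real (bookkeeping) work to lie, is the three scalar products. Since each is sign-indefinite, I would estimate every contribution to it separately by Cauchy--Schwarz with the sign chosen to yield a plus, using $\pm\,\nabla u\cdot\chi(v)\nabla v\le u\,\textsf{x}\textsf{y}$, $\pm\,\nabla u\cdot\xi(w)\nabla w\le u\,\textsf{x}\textsf{z}$ and $\pm\,\chi(v)\nabla v\cdot\xi(w)\nabla w\le\textsf{y}\textsf{z}$, and then add the resulting magnitudes. The coefficients $a_2=p(p+2r-1)$, $a_3=p(p+2\sigma-1)$ and $a_5=pr+p\sigma+2r\sigma$ emerge precisely as these sums of magnitudes; the point to watch is that contributions of opposite algebraic sign---such as the $+p\sigma$ produced by $\nabla f$ in the Laplacian part of $I_1$ and the $-p(p-1)$ produced by the repulsion part of $I_1$, both multiplying $\nabla u\cdot\xi(w)\nabla w$---must be bounded in absolute value \emph{before} they are combined, so that $a_3=p(p-1)+2p\sigma$ appears as a sum of magnitudes rather than as a smaller cancelled quantity. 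Assembling the squares, the reserved term and these three cross-term estimates then gives exactly \eqref{est1}.
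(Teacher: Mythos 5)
Your proposal is correct and follows exactly the computation the paper relies on: the paper's own ``proof'' merely observes that $-\ep p(p-1)+a_1(\ep)=-p(p-1)=a_1(0)$ and cites the $\ep=0$ case worked out in \cite[p.~10]{CMY-2020}, which is precisely the integration-by-parts argument you reconstruct (and your coefficient bookkeeping, including the absolute-value estimates on the cross terms and the roles of \eqref{chiineq}, \eqref{xiineq} in producing $-r\alpha\textsf{y}^2$ and $-\sigma\beta\textsf{z}^2$, checks out). No gaps.
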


\begin{proof}
Noting that $-\ep p(p-1)+a_1(\ep)=-p(p-1)$ 
for all $\ep \in (0,1)$, 
we see that the estimate \eqref{est1} is almost all the same as that in the case $\ep=0$ except multiplication by constants   
and is proved in~\cite[p.~10]{CMY-2020}. 
\end{proof}

We next give the following lemma, which is useful to show that the second term 
on the right-hand side of \eqref{est1} is nonpositive.

\begin{lem}\label{A_1A_2}
 Assume that $\alpha, \beta$ satisfy \eqref{condiab}. 
 Then there exist $p>\frac{n}{2}$ and $r, \sigma>0$ 
 such that
    \begin{align} \label{Syl}
             A_1:=\left|
                    \begin{array}{cc}
                                   a_1(0) & \frac{a_3}{2}
                    \\
                        \frac{a_3}{2} & a_6
                    \end{array}
                    \right|>0
    \quad
             {\rm and}
    \quad
             A_2:=\left|
                    \begin{array}{ccc}
                                   a_1(0) & \frac{a_3}{2} & \frac{a_2}{2}
                    \\
                        \frac{a_3}{2} &            a_6 & \frac{a_5}{2}
                    \\
                        \frac{a_2}{2} & \frac{a_5}{2} &            a_4
                    \end{array}
                    \right| <0.
    \end{align}
\end{lem}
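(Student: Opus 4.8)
The plan is to read \eqref{Syl} through Sylvester's criterion. Since $a_1(0) = -p(p-1) < 0$ for every $p > 1$ (and here $p > n/2 \ge 1$), the two requirements $A_1 > 0$ and $A_2 < 0$ are exactly the conditions making the symmetric matrix behind the quadratic form in \eqref{est1} negative definite, so it suffices to exhibit one admissible triple $(p,r,\sigma)$. The structural point I would exploit is that the principal minors decouple in the three parameters: $A_1 = a_1(0)a_6 - a_3^2/4$ depends only on $(p,\sigma)$ (through $a_1(0), a_3, a_6$), whereas $a_2, a_4, a_5$, and hence $A_2$, are polynomial in $r$. Accordingly I would first fix $(p,\sigma)$ to guarantee $A_1 > 0$, and only then choose $r$ to force $A_2 < 0$.

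For the first step, note that $A_1 > 0$ forces $a_6 < 0$, i.e. $\sigma + \beta > p$, and a short computation gives $A_1 > 0 \iff \sigma^2 - (p-1)(\beta - 2p)\sigma + \tfrac14 p(p-1)^2 < 0$. The clean way to match this with \eqref{condiab} is the substitution $\sigma = \tfrac12 (p-1)\delta$, which turns the inequality into $\delta^2 - 2(\beta - 2p)\delta + p < 0$. As $p \downarrow n/2$ this becomes $\delta^2 - 2(\beta - n)\delta + \tfrac n2 < 0$, i.e. $\delta \in J$; equivalently, the denominator $2\delta(\beta - n) - \delta^2 - \tfrac n2$ in \eqref{condiab} is positive precisely in the limiting regime where $A_1 > 0$ persists, while the constraint $\beta > n + \sqrt{n/2}$ is exactly nonemptiness of $J$ (positivity of the discriminant), and the side condition $\beta > 2p$ guarantees $\sigma > 0$.

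For the second step I would expand $A_2$ along its first row and collect it as a quadratic $A_2 = C_2 r^2 + C_1 r + C_0$ in $r$. One checks that the leading coefficient $C_2$ is a sum of manifestly positive terms (using $\sigma + \beta > p$ and $p > 1$), so $A_2$ opens upward; hence $A_2 < 0$ is solvable in $r > 0$ if and only if the $r$-discriminant $C_1^2 - 4C_2 C_0$ is positive and the resulting sign-change interval meets $(0,\infty)$, which I would secure by evaluating at the vertex $r_\ast = -C_1/(2C_2)$ and checking $r_\ast > 0$. The coefficient $\alpha$ enters only through $a_4 = -r(p + r + \alpha)$, so the discriminant inequality is at most quadratic in $\alpha$ and solves to a lower bound $\alpha > (\cdots)$; substituting $\sigma = \tfrac12 (p-1)\delta$ and letting $p \downarrow n/2$, this bound collapses to the right-hand side of \eqref{condiab}, with $D$ appearing as the radicand of the discriminant. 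Finally, since all the inequalities are strict and depend continuously on $p$, holding them in the limit $p = n/2$ under \eqref{condiab} yields, by continuity, an actual $p > n/2$ together with $\sigma = \tfrac12(p-1)\delta > 0$ and a vertex-adjacent $r > 0$ realizing $A_1 > 0$ and $A_2 < 0$.

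The main obstacle is the bookkeeping in the second step: computing $C_0, C_1, C_2$ explicitly, verifying $r_\ast > 0$, and reducing the discriminant condition to the precise numerator $\tfrac n2(2\delta + 1)\big((n-1)\delta + n\big) + \sqrt D$ of \eqref{condiab}. Everything else is either a sign check or the limit $p \downarrow n/2$, which is routine.
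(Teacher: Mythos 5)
Your proposal is correct and follows essentially the same route as the paper, which itself simply defers to the proof of Lemma~3.3 in \cite{CMY-2020}: there one also normalizes $\sigma$ via $\delta$ to secure $A_1>0$ and then treats $A_2$ as an upward-opening parabola in $r$ whose positive axis and positive discriminant yield a strictly negative value at some $r>0$ --- exactly the two conditions you propose to check, and exactly the refinement from $A_2\le 0$ to $A_2<0$ that the paper's Remark~3.1 records. The only caveat is that the decisive bookkeeping (that the $r$-discriminant condition collapses to the precise bound on $\alpha$ in \eqref{condiab} with the stated $D$) is deferred in your sketch, but it is likewise deferred in the paper to the cited reference.
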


\begin{remark}
In~\cite[Proof of Lemma~3.3]{CMY-2020}, we showed that 
there exist $p>\frac{n}{2}$ and $r, \sigma>0$ such that 
$A_1>0$, $A_2 \le 0$. 
Here, $A_2 \le 0$ can be refined as $A_2<0$ 
for some $p>\frac{n}{2}$ and $r, \sigma>0$. 
More precisely, in the literature we set
    \begin{align*}
        \varphi_2(r):=c_1r^2+c_2r+c_3
    \end{align*}
 with some $c_1, c_2, c_3>0$ and find $r>0$ such that 
 $\varphi_2(r) \le 0$ by the following two conditions:
%
    \begin{align*}
         r_0>0, \quad &\mbox{where 
         $r_0$ is the axis of the parabola $\varphi_2$,}
       \\
         D_2>0, \quad &\mbox{where $D_2$ 
         is the discriminant of $\varphi_2$.}
    \end{align*}
These conditions, indeed, show that 
$\varphi_2(r)<0$ for some $r>0$ which implies to $A_2<0$. 
\end{remark}

Combining the above three lemmas, 
we can derive the following important inequality 
which leads to the $L^p$-estimate for $u$. 
%
\begin{lem} \label{DIupf2}
 Assume that $\chi, \xi$ satisfy \eqref{chiclass}--\eqref{xiineq} with 
 $\alpha, \beta$ which fulfill \eqref{condiab}. 
 Then there exist $p>\frac{n}{2}$ and $r, \sigma>0$ 
 such that
%
    \begin{align*}
            \frac{d}{dt}\int_\Omega u^pf
            +\ep_0 p(p-1)\int_\Omega u^{p-2}f|\nabla u|^2
        &\le
            -r\int_\Omega u^pf\chi(v)(-v+u)
            -\sigma\int_\Omega u^pf\xi(w)(-w+u)
    \end{align*}
for all $t \in (0, \Tmax)$ with some $\ep_0 \in (0,1)$.
\end{lem}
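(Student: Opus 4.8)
The plan is to combine the three preceding lemmas and absorb part of the ``good'' gradient term onto the left-hand side, leaving on the right a quadratic form that is pointwise nonpositive. First I would fix $p>\frac{n}{2}$ and $r,\sigma>0$ as supplied by Lemma~\ref{A_1A_2}, so that the determinantal conditions \eqref{Syl} hold with the \emph{strict} inequalities $A_1>0$ and $A_2<0$. Throughout, write $Q_\ep:=a_1(\ep)\textsf{x}^2+a_2\textsf{xy}+a_3\textsf{xz}+a_4\textsf{y}^2+a_5\textsf{yz}+a_6\textsf{z}^2$ for the quadratic form appearing on the right of \eqref{est1}.

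Next, inserting the estimate \eqref{est1} (with $\ep=\ep_0\in(0,1)$ to be chosen) into the identity \eqref{est0} of Lemma~\ref{DIupf1}, and using the pointwise identity $u^pf\,\textsf{x}^2=u^{p-2}f|\nabla u|^2$, I would transfer the term $-\ep_0 p(p-1)\int_\Omega u^pf\,\textsf{x}^2$ to the left-hand side. This gives
\begin{align*}
\frac{d}{dt}\int_\Omega u^pf+\ep_0 p(p-1)\int_\Omega u^{p-2}f|\nabla u|^2
&\le \int_\Omega u^pf\,Q_{\ep_0}(\textsf{x},\textsf{y},\textsf{z})\\
&\quad -r\int_\Omega u^pf\chi(v)(-v+u)-\sigma\int_\Omega u^pf\xi(w)(-w+u).
\end{align*}
Since $u^pf\ge 0$, it then suffices to show that $Q_{\ep_0}\le 0$ as a quadratic form, for then the first integral on the right is discarded and the claimed inequality follows at once.

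The core of the argument is therefore to verify that $Q_{\ep_0}$ is negative semidefinite for some $\ep_0>0$. Ordering the variables as $(\textsf{x},\textsf{z},\textsf{y})$, the associated symmetric matrix is exactly the $3\times 3$ matrix defining $A_2$ in \eqref{Syl} but with $a_1(0)$ replaced by $a_1(\ep)$; its leading principal minors are $D_1(\ep)=a_1(\ep)$, $D_2(\ep)=a_1(\ep)a_6-\tfrac{a_3^2}{4}$ and $D_3(\ep)=\det$. At $\ep=0$ these equal $a_1(0)=-p(p-1)<0$, $A_1>0$ and $A_2<0$, so by Sylvester's criterion $Q_0$ is negative definite. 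As $a_1(\ep)=-(1-\ep)p(p-1)$ is affine in $\ep$, each $D_k(\ep)$ is continuous, so the three strict sign conditions $D_1<0$, $D_2>0$, $D_3<0$ persist for all sufficiently small $\ep>0$; choosing any such $\ep_0\in(0,1)$ renders $Q_{\ep_0}$ negative definite, hence $\le 0$.

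I expect the delicate point to be precisely this perturbation step, and it is exactly where the refinement emphasized in the Remark is used: the argument needs the strict inequality $A_2<0$, not merely $A_2\le 0$. If one had only $D_3(0)=0$, then increasing $a_1(\ep)$ toward $0$ could immediately force $D_3(\ep)>0$ and destroy definiteness, leaving no room for a genuine dissipative coefficient $\ep_0>0$; the strictness of $A_2<0$ is the slack that guarantees such an $\ep_0$ exists.
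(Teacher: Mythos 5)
Your proposal is correct and follows essentially the same route as the paper: fix $p,r,\sigma$ from Lemma~\ref{A_1A_2}, perturb $a_1(0)$ to $a_1(\ep)$, use continuity of the principal minors to preserve the strict sign conditions for small $\ep_0>0$, and conclude negative (semi)definiteness of the quadratic form via Sylvester's criterion before discarding it. Your explicit check of the first minor $D_1(\ep)=a_1(\ep)<0$ and your remark on why the strictness $A_2<0$ is indispensable are both accurate refinements of what the paper leaves implicit.
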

%
%
\begin{proof}
We put
%
    \begin{align*}
             A_1(\ep):=\left|
                    \begin{array}{cc}
                                   a_1(\ep) & \frac{a_3}{2}
                    \\
                        \frac{a_3}{2} & a_6
                    \end{array}
                    \right|
    \quad
             {\rm and}
    \quad
             A_2(\ep):=\left|
                    \begin{array}{ccc}
                                   a_1(\ep) & \frac{a_3}{2} & \frac{a_2}{2}
                    \\
                        \frac{a_3}{2} &            a_6 & \frac{a_5}{2}
                    \\
                        \frac{a_2}{2} & \frac{a_5}{2} &            a_4
                    \end{array}
                    \right|
    \end{align*}
 for $\ep \in [0, 1)$. 
 Since $A_1(0)>0$ and $A_2(0)<0$ hold in view of \eqref{Syl} and 
 the function $a_1 \colon \ep \mapsto -(1-\ep)p(p-1)$ is continuous at 
 $\ep=0$, 
 we can find $\ep_0 \in (0,1)$ such that $A_1(\ep_0)>0$ and $A_2(\ep_0)<0$. 
 By using the Sylvester criterion, we have 
\begin{align}\label{est2}
a_1(\ep_0) \textsf{x}^2+a_2 \textsf{xy}+a_3 \textsf{xz}+a_4 \textsf{y}^2
+a_5 \textsf{yz}+a_6 \textsf{z}^2 \le 0.
\end{align}
Combining \eqref{est1} and \eqref{est2} with \eqref{est0}, 
we arrive at the conclusion.
\end{proof}

We now show the desired $L^p$-estimate for $u$ 
with some $p>\frac{n}{2}$.
%
\begin{lem} \label{uLpbdd2}
 Let $p>\frac{n}{2}$. 
 Assume that $\chi, \xi$ satisfy \eqref{chiclass}--\eqref{xiineq} 
 with $\alpha, \beta$ which fulfill \eqref{condiab}. 
 Then there exists $C>0$ such that 
%
    \begin{align*}
             \|u(\cdot, t)\|_{L^p(\Omega)} \le C
    \end{align*}
 for all $t \in (0, \Tmax)$.
\end{lem}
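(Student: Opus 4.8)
The plan is to convert the dissipative differential inequality of Lemma~\ref{DIupf2} into the autonomous differential inequality \eqref{DIupf} and then to close the argument by an elementary ODE comparison. Throughout I write $y(t):=\int_\Omega u^pf$ and I fix $p>\frac{n}{2}$ together with $r,\sigma>0$ and $\ep_0\in(0,1)$ as provided by Lemma~\ref{DIupf2}, so that
\begin{align*}
 y'(t)+\ep_0 p(p-1)\int_\Omega u^{p-2}f|\nabla u|^2
 \le -r\int_\Omega u^pf\chi(v)(-v+u)-\sigma\int_\Omega u^pf\xi(w)(-w+u)
\end{align*}
for all $t\in(0,\Tmax)$. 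The whole task is thus to show that the diffusion-generated dissipation on the left already controls a \emph{superlinear} power of $y$, which is the role played by the logistic term in \cite{CMY-2020}.

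First I would treat the right-hand side. Expanding it produces the two terms $r\int_\Omega u^pf\,v\chi(v)$ and $\sigma\int_\Omega u^pf\,w\xi(w)$ together with the manifestly nonpositive contributions $-r\int_\Omega u^{p+1}f\chi(v)$ and $-\sigma\int_\Omega u^{p+1}f\xi(w)$. Using \eqref{chibound}, \eqref{xibound} in the form $v\chi(v)\le\chi_0$, $w\xi(w)\le\xi_0$ (legitimate because $v\ge\eta_1$, $w\ge\eta_2$ by \eqref{LEw}) and discarding the two nonpositive terms, the right-hand side is bounded above by $(r\chi_0+\sigma\xi_0)\,y$. This turns the starting inequality into $y'+\ep_0 p(p-1)\int_\Omega u^{p-2}f|\nabla u|^2\le(r\chi_0+\sigma\xi_0)\,y$.

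Next I would exploit a two-sided pointwise bound on $f$. Since $\chi\in L^1(\eta_1,\infty)$ and $\xi\in L^1(\eta_2,\infty)$ by \eqref{chiclass}, \eqref{xiclass}, while $v\ge\eta_1$, $w\ge\eta_2$ and $\chi,\xi>0$, the exponent in \eqref{Deff} lies in $[-r\|\chi\|_{L^1(\eta_1,\infty)}-\sigma\|\xi\|_{L^1(\eta_2,\infty)},\,0]$, so $f_*\le f\le1$ with $f_*:=\exp(-r\|\chi\|_{L^1(\eta_1,\infty)}-\sigma\|\xi\|_{L^1(\eta_2,\infty)})>0$ independent of $t$. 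Writing the dissipation as $\int_\Omega u^{p-2}f|\nabla u|^2=\frac{4}{p^2}\int_\Omega f|\nabla u^{p/2}|^2\ge\frac{4f_*}{p^2}\|\nabla u^{p/2}\|_{L^2(\Omega)}^2$ and setting $\varphi:=u^{p/2}$, the mass conservation \eqref{mcp} gives the time-independent bound $\|\varphi\|_{L^{2/p}(\Omega)}=m_0^{p/2}$. The Gagliardo--Nirenberg inequality then yields $\|\varphi\|_{L^2(\Omega)}^2\le C_5\|\nabla\varphi\|_{L^2(\Omega)}^{2\lambda}+C_6$ with interpolation exponent $\lambda\in(0,1)$ determined by $\frac12=\lambda(\frac12-\frac1n)+(1-\lambda)\frac p2$. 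Because $y\le\|\varphi\|_{L^2(\Omega)}^2$ (as $f\le1$), inverting this estimate bounds $\|\nabla\varphi\|_{L^2(\Omega)}^2$ below by a constant multiple of $(y-C_6)_+^{1/\lambda}$, whence $\ep_0 p(p-1)\int_\Omega u^{p-2}f|\nabla u|^2\ge c_2(y-C_6)_+^{1/\lambda}$. Combining this with the previous paragraph gives $y'\le(r\chi_0+\sigma\xi_0)\,y-c_2(y-C_6)_+^{1/\lambda}$, and since $1/\lambda>1$ a short manipulation (estimating $(y-C_6)_+^{1/\lambda}$ below by a multiple of $y^{1/\lambda}$ minus a constant) puts this exactly in the form \eqref{DIupf} with $\theta=\lambda$. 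As the superlinear term eventually dominates, the right-hand side of \eqref{DIupf} is negative for all large $y$, so an ODE comparison yields $y(t)\le\max\{y(0),y_*\}$ for all $t\in(0,\Tmax)$, where $y_*$ bounds the region in which that right-hand side stays nonnegative and $y(0)\le\|u_0\|_{L^\infty(\Omega)}^p|\Omega|<\infty$. Using $f\ge f_*$ once more, $\int_\Omega u^p\le f_*^{-1}y\le f_*^{-1}\max\{y(0),y_*\}$, which is the claimed $L^p$-bound.

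The main obstacle is the interpolation step: one must check that the Gagliardo--Nirenberg exponent satisfies $\lambda<1$, so that the diffusion-generated term $(y-C_6)_+^{1/\lambda}$ is genuinely superlinear in $y$ and can replace the superlinear damping previously furnished by the logistic source. A direct computation shows $\lambda=\frac{n(p-1)}{np-n+2}$, which lies in $(0,1)$ for every $p>1$ and $n\ge2$, so the mechanism is available; the remaining ingredients — expanding the signal terms, the bounds $f_*\le f\le1$, the mass constraint $\|\varphi\|_{L^{2/p}(\Omega)}=m_0^{p/2}$, and the ODE comparison — are routine once these are in place.
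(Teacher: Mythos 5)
Your proposal is correct and follows essentially the same route as the paper's own proof: bounding the right-hand side by $(r\chi_0+\sigma\xi_0)\int_\Omega u^pf$ via \eqref{chibound}--\eqref{xibound}, using the two-sided bound on $f$ together with Gagliardo--Nirenberg and mass conservation to extract a superlinear damping term from the dissipation (your exponent $\lambda=\frac{n(p-1)}{np-n+2}$ coincides with the paper's $\theta$), and closing with an ODE comparison.
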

%
\begin{proof}
 By Lemma \ref{DIupf2}, we see from the 
 positivity of $\chi, \xi$ and \eqref{chibound}, \eqref{xibound} that 
%
\begin{align} \label{L34}
               \frac{d}{dt}\int_\Omega u^pf+\ep_0 p(p-1)\int_\Omega u^{p-2}f|\nabla u|^2
        &\le 
               -r\int_\Omega u^pf\chi(v)(-v+u)
               -\sigma\int_\Omega u^pf\xi(w)(-w+u)
    \notag\\
        &\le r\chi_0\int_\Omega u^pf+\sigma\xi_0\int_\Omega u^pf
    \notag\\
        &= (r\chi_0+\sigma \xi_0)\int_\Omega u^pf
    \end{align}
for all $t \in (0, \Tmax)$ with some $\ep_0 \in (0,1)$. 
Noting $f \le 1$ in view of \eqref{Deff} and then 
using the Gagliardo--Nirenberg inequality together with
the mass conservation property \eqref{mcp}, we have 
%
    \begin{align}\label{GNest}
           \int_\Omega u^p f
    &\le \int_\Omega u^p=\|u^{\frac{p}{2}}\|_{L^2(\Omega)}^2\notag\\
    &\le c_1\Big(\|\nabla u^{\frac{p}{2}}\|_{L^2(\Omega)}
           +\|u^{\frac{p}{2}}\|_{L^{\frac{2}{p}}(\Omega)}\Big)^{2\theta}
           \|u^{\frac{p}{2}}\|_{L^{\frac{2}{p}}(\Omega)}^{2(1-\theta)}\notag\\
    &= c_1\Big(\|\nabla u^{\frac{p}{2}}\|_{L^2(\Omega)}
           +\|u_0\|_{L^1(\Omega)}^{\frac{p}{2}}\Big)^{2\theta}
           \|u_0\|_{L^1(\Omega)}^{p(1-\theta)}\notag\\
    &\le c_2\|\nabla u^{\frac{p}{2}}\|_{L^2(\Omega)}^{2\theta}
           +c_3
    \end{align}
 for all $t \in (0, \Tmax)$ with some $c_1, c_2, c_3>0$, where 
 $\theta:=\frac{\frac{pn}{2}-\frac{n}{2}}{\frac{pn}{2}+1-\frac{n}{2}} \in (0,1)$. 
 Also, noticing from $\chi \in L^1(\eta_1, \infty)$, $\xi \in L^1(\eta_2,\infty)$
 (see \eqref{chiclass}, \eqref{xiclass}) that 
%
    \begin{align}\label{fbdd}
             f \ge c_4
              :=\exp\Big(-r\int_{\eta_1}^\infty \chi(s)\,ds
                              -\sigma\int_{\eta_2}^\infty \xi(s)\,ds
                        \Big)>0
    \quad
             {\rm on}\ \Omega \times (0, \Tmax),
    \end{align}
 we obtain 
\begin{align}\label{up/2est}
\frac{4c_4}{p^2}\|\nabla u^{\frac{p}{2}}\|_{L^2(\Omega)}^{2}=\frac{4c_4}{p^2}\int_\Omega |\nabla u^{\frac{p}{2}}|^2
\le \int_\Omega u^{p-2}f|\nabla u|^2.
\end{align}
for all $t \in (0, \Tmax)$. 
Combining \eqref{GNest}, \eqref{up/2est} with \eqref{L34}, we see that
%
\begin{align*}
               \frac{d}{dt}\int_\Omega u^pf
        &\le c_5\int_\Omega u^pf
               -c_6\Big(\int_\Omega u^p f\Big)^{\frac{1}{\theta}}+c_7
    \end{align*}
for all $t \in (0, \Tmax)$  with some $c_5, c_6, c_7>0$. 
 This provides 
 a constant $c_8>0$ such that 
%
    \begin{align*}
             \int_\Omega u^pf \le c_8,
    \end{align*}
which again by \eqref{fbdd} implies
%
    \begin{align*}
             \int_\Omega u^p \le \frac{p^2c_8}{4c_4}
    \end{align*}
for all $t \in (0, \Tmax)$ 
 and thereby precisely arrive at the conclusion.
\end{proof}
\vspace{5mm}

 We are in a position to complete the proof of Theorem \ref{mainthm}.
 If $\chi, \xi$ satisfy \eqref{chiclass}--\eqref{xiineq} 
 with $\alpha, \beta$ fulfilling \eqref{condiab}, 
 then, according to the relations that $\chi(s) \le \chi(\eta_1)$ for all $s \ge \eta_1$ and $\xi(s) \le \xi(\eta_2)$ for all $s \ge \eta_2$
 (see \eqref{chiineq} and \eqref{xiineq}), 
 a combination of 
 Lemmas~\ref{bddu} and~\ref{uLpbdd2}, along 
 with \eqref{BU}, leads to the end of the proof.

%

 \newpage

\end{document}